\title{Restricted integer partition functions
%\\
%{\bf Draft}\\
%\\{\tiny file: partition.tex}
}
\author{Noga Alon
\thanks{Sackler School of Mathematics
and Blavatnik School of Computer Science,
Tel Aviv University,
Tel Aviv 69978, Israel.
Email: {\tt nogaa@tau.ac.il}.
Research supported in part by an ERC Advanced
grant, by a USA-Israeli BSF grant and by 
the Israeli I-Core program.}
}
\newenvironment{proof}
      {\medskip\noindent{\bf Proof.}\hspace{1mm}}
      {\hfill$\Box$\medskip}
\def\qed{\ifvmode\mbox{ }\else\unskip\fi\hskip 1em plus 10fill$\Box$}
\newtheorem{theo}{Theorem}[section]
\newtheorem{lemma}[theo]{Lemma}
\newtheorem{coro}[theo]{Corollary}
\newcommand{\CC}{{\cal C}}
\begin{document}
\date{}

\maketitle

\begin{abstract}
For two sets $A$ and $M$ of positive integers and for a positive
integer $n$, let $p(n,A,M)$ denote the number of partitions of $n$
with parts in $A$ and multiplicities in $M$, that is, the number of
representations of $n$ in the form $n=\sum_{a \in A} m_a a$
where $m_a \in M \cup \{0\}$ for all $a$, and all numbers $m_a$ but
finitely many are $0$. It is shown that there are infinite sets $A$ and
$M$ 
so that $p(n,A,M)=1$ for every positive integer $n$. This settles
(in a strong form) a problem of Canfield and Wilf. It is also shown
that there is an infinite set $M$ and constants $c$ and $n_0$ so
that for $A=\{k!\}_{k \geq 1}$ or for $A=\{k^k\}_{k \geq 1}$,
$0<p(n,A,M) \leq n^c$ for all $n>n_0$. This answers a question of
Ljuji\'c and Nathanson.
\iffalse
and constants $c$ and $n_0$  so that $0<p(n,A,M)<n^c$ for all
$n>n_0$. This settles a problem of Canfield and Wilf. The proof is
probabilistic and supplies the existence of such an infinite $M$
for any sufficiently sparse infinite set $A$ satisfying $gcd(A)=1$.
\fi
\end{abstract}

\section{Introduction}
For two sets $A$ and $M$ of positive integers and for a positive
integer $n$, let $p(n,A,M)$ denote the number of representations of
$n$ in the form
\begin{equation}
\label{e11}
n=\sum_{a \in A} m_a a
\end{equation}
where $m_a \in M \cup \{0\}$ for all $a$, and all numbers $m_a$ but
finitely many are $0$.  We say that the function $p=p(n,A,M)$ has
polynomial growth  if there exists an absolute constant $c$ and an
integer $n_0$ so that $p(n,A,M) \leq n^c$ for all $n>n_0$.
Canfield and Wilf \cite{CW} raised the following question.
\vspace{0.3cm}

\noindent
{\bf Question 1 [Canfield and Wilf \cite{CW}]}\,

Do there exist two
{\bf infinite} sets $A$ and $M$ so that $p(n,A,M)>0$ for all
sufficiently large $n$ and yet $p$ has polynomial growth ?
\vspace{0.3cm}

\noindent
Ljuji\'c and Nathanson \cite{LN} observed, among other things, that
this cannot be the case if the set $A$ has at least $\delta \log
n$ members in $[n]=\{1,2,\ldots ,n\}$ for all sufficiently large $n$, 
where  $\delta>0$ is any positive constant, and asked the following
two more specific questions.
%\vspace{0.3cm}
\newpage

\noindent
{\bf Question 2 [Ljuji\'c and Nathanson \cite{LN}]}\,

Let $A=\{k!\}_{k=1}^{\infty}$. Does there exist an infinite set
$M$ of positive integers so that
$p(n,A,M)>0$ for all
sufficiently large $n$, and  $p$ has polynomial growth ?
\vspace{0.3cm}

\noindent
{\bf Question 3 [Ljuji\'c and Nathanson \cite{LN}]}\,

Let $A=\{k^k\}_{k=1}^{\infty}$. Does there exist an infinite set
$M$ of positive integers so that
$p(n,A,M)>0$ for all
sufficiently large $n$ and $p$ has polynomial growth ?
\vspace{0.3cm}

\noindent
In this note we prove that the answer to all three questions above is
positive. 

Our first result is simple, and shows that the answer to the
first question is positive in the strongest possible way: there are
infinite sets $A$ and $M$ so that $p(n,A,M)=1$ for all $n$.
This is stated in the following Theorem.
\begin{theo}
\label{t11}
There are two infinite sequences of positive integers $A$ and $M$
so that $p(n,A,M)=1$ for every positive integer $n$.
\end{theo}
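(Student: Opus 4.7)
The plan is to construct the pair $(A, M)$ explicitly by partitioning the binary positions of $\mathbb{Z}_{\ge 0}$. Let $P \subseteq \mathbb{Z}_{\ge 0}$ denote the set of non-negative integers whose binary expansion has $1$'s only at even-indexed positions, and let $S = 2P$ be the analogous set for odd-indexed positions. Both sets are infinite, and the key fact is that $P + S = \mathbb{Z}_{\ge 0}$ as a direct sum: every $n \ge 0$ has a unique decomposition $n = p + s$ with $p \in P$ and $s \in S$, obtained by separating the even- and odd-position bits of $n$ (equivalently, via the unique base-$4$ digit decomposition $d = \epsilon + 2\delta$ with $\epsilon, \delta \in \{0,1\}$, applied digit by digit).

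Next, I would set $A := \{2^s : s \in S\}$ and let $M$ be the set of positive integers of the form $\sum_{p \in F} 2^p$ with $\emptyset \ne F \subseteq P$ finite; equivalently, $M$ consists of positive integers whose binary $1$-bits lie entirely in positions belonging to $P$. Since $P$ and $S$ are infinite, both $A$ and $M$ are infinite sequences of positive integers.

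To prove $p(n, A, M) = 1$ for every positive integer $n$, I would start from the unique binary expansion $n = \sum_{i \ge 0} \epsilon_i\, 2^i$ and, for each $i$, write $i = p_i + s_i$ uniquely with $p_i \in P$, $s_i \in S$. Regrouping yields
\[
n = \sum_{s \in S} 2^s \left(\sum_{p \in P} \epsilon_{p+s}\, 2^p\right) = \sum_{s \in S} m_s \cdot 2^s,
\]
where each $m_s \in M \cup \{0\}$ and only finitely many $m_s$ are nonzero; this establishes existence. For uniqueness, in any representation $n = \sum_{s \in S} m_s \cdot 2^s$ with $m_s \in M \cup \{0\}$, the integer $m_s \cdot 2^s$ has binary $1$-bits supported in the translate $s + P$, and the direct-sum property of $P + S$ means the translates $\{s + P\}_{s \in S}$ partition $\mathbb{Z}_{\ge 0}$; hence the $1$-bits of $n$ distribute uniquely across these blocks, forcing each $m_s$ to be determined. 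The only non-routine step is the verification that $P + S = \mathbb{Z}_{\ge 0}$ as a direct sum, which is immediate from the base-$4$ digit observation above, after which everything else is bookkeeping about which bit lives in which block.
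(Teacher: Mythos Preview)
Your construction is correct and is precisely the paper's own argument specialized to the partition of bit positions into evens and odds: in the paper's notation your $(P,S)$ is the pair $(D,E)$ arising from $B=\{\text{even integers}\}$, $C=\{\text{odd integers}\}$ (up to swapping the symmetric roles of $B$ and $C$). The paper keeps the partition $B\cup C$ arbitrary only to later tune the growth rate of $A$, but for the theorem as stated your concrete choice is entirely sufficient.
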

The proof is by an explicit construction, which is general enough
to give examples with sets $A$ that grow at any desired rate which
is faster than exponential. This rate of growth is tight, by the
observation of Ljuji\'c and Nathanson mentioned above.

We also prove the following result, which settles Questions 2 and
3.
\begin{theo}
\label{t12}
Let $A$ be an infinite set of positive integers, and suppose that
$1 \in A$ and 
$$
|A \cap [n]| =(1+o(1)) \frac{\log n}{\log \log n},
$$ 
where the
$o(1)$ term tends to $0$ as $n$ tends to infinity. Then there
exists $n_0$ and an infinite set $M$ of 
positive integers so that
$0<p(n,A,M)<n^{8+o(1)}$ for all
$n>n_0$.
\end{theo}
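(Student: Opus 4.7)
\medskip\noindent\textbf{Proof plan.}
The approach is probabilistic: include each positive integer $m$ in $M$ independently with probability
\[
p_m := \min\!\bigl\{1,\, C\,(\log m)^{7}/m\bigr\}
\]
for a suitable absolute constant $C$. Then $\mathbb{E}|M \cap [1,N]| = \Theta((\log N)^{8})$, and a standard Chernoff bound plus Borel--Cantelli yields that almost surely $|M \cap [1,N]| \leq (\log N)^{8+o(1)}$ for every sufficiently large $N$; $M$ is almost surely infinite since $\sum_m p_m = \infty$. Granted this sparsity, the upper bound on $p(n,A,M)$ is immediate: every representation specifies $m_a \in (M\cup\{0\}) \cap [0,n/a]$ for each $a \in A \cap [n]$, so
\[
p(n,A,M) \leq \prod_{a \in A,\, a \leq n}\!\bigl(|M\cap[0,n/a]|+1\bigr) \leq \bigl((\log n)^{8+o(1)}\bigr)^{|A\cap[n]|} = n^{8+o(1)},
\]
using the hypothesis $|A \cap [n]| = (1+o(1))\log n/\log\log n$.

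Existence of a representation for every large $n$ is the main obstacle. By Borel--Cantelli it suffices to show $\Pr[p(n,A,M) = 0] \leq n^{-2}$ for each large $n$. Since $1 \in A$, every representation has the form $n = m_1 + \sum_{k \geq 2} m_k a_k$ with all $m_k \in M \cup \{0\}$, so one only needs a tuple $\vec m = (m_2,\ldots,m_K) \in (M\cup\{0\})^{K-1}$ (where $K = |A \cap [n]|$) whose residue $r(\vec m) := n - \sum_{k \geq 2} m_k a_k$ lies in $M \cup \{0\}$. The plan is to produce a large family of such tuples and show that the set $R_n$ of distinct residues they produce has cardinality $n^{1-o(1)}$, so that $\Pr[R_n \cap M = \emptyset] \leq n^{-2}$.

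To decouple the two roles of $M$ (supplying the multiplicities $m_k$ and containing the residue), I split $\mathbb{Z}_{>0}$ deterministically into two disjoint infinite sets $S_1, S_2$ (for example by the parity of $\lfloor\log_2 m\rfloor$), and take $M_i \subseteq S_i$ to be independent random sets with the same inclusion probabilities $p_m$, so $M = M_1 \cup M_2$. Multiplicities are drawn from $M_1 \cup \{0\}$, and the residue is tested against $M_2$. The technical heart of the existence argument is a second-moment estimate showing that with probability at least $1 - o(n^{-2})$ over $M_1$, the residue set $R_n$ obtained from tuples in $(M_1\cup\{0\})^{K-1}$ has cardinality at least $n/(\log n)^{6}$. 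This uses the rapid growth of $A$ (implied by the hypothesis on $|A\cap[n]|$) to bound collisions among sums $\sum_k m_k a_k$. Conditional on $M_1$, each $r \in R_n \cap S_2$ lies in $M_2$ independently with probability $\asymp (\log n)^{7}/n$, so
\[
\Pr\bigl[R_n \cap M_2 = \emptyset \,\bigm|\, M_1\bigr] \leq \exp\!\Bigl(-\Omega\bigl((\log n)^{7}\,|R_n\cap S_2|/n\bigr)\Bigr) \leq \exp(-\Omega(\log n)),
\]
which (after choosing $C$ sufficiently large) beats $n^{-2}$. A final Borel--Cantelli step yields $p(n,A,M) \geq 1$ for all large $n$ almost surely. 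The main difficulty in the whole argument is the second-moment control of $|R_n|$: the number of candidate tuples is only $n^{8+o(1)}$ while residues live in $[0,n]$, so one must use the dissociative structure of $A$ to rule out excessive collisions and guarantee $n^{1-o(1)}$ distinct residues.
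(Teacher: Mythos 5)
Your high-level plan matches the paper's: construct $M$ randomly with inclusion probability $\Theta((\log m)^7/m)$, obtain sparsity $|M\cap[N]|=(\log N)^{8+o(1)}$ for the polynomial upper bound (your upper-bound computation is essentially identical to the paper's Corollary 3.2), and then argue probabilistically that every large $n$ is representable. For the existence step, however, you propose a genuinely different route: split $M=M_1\cup M_2$, draw multiplicities $m_2,\dots,m_K$ from $M_1$, form the residue $r=n-\sum_{k\ge 2}m_k a_k$, and test $r$ against the independent set $M_2$, the crux being a second-moment lower bound $|R_n|\gtrsim n/(\log n)^6$ with failure probability $o(n^{-2})$. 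The paper instead avoids any explicit control of $|R_n|$: it constructs $M$ as a union of independent random pieces $M_{a,i}\subset[2^i]$, restricts attention to ``special'' tuples $(m_1,\dots,m_q)$ with $\sum m_j a_j=n$ and $m_j\in M_{a_j,i_j}$ where $2^{i_j}\approx n/(a_j\log n)$, and applies the Janson inequality directly to these $n^{1-o(1)}$ events, computing $\mu$ and $\Delta$ to get failure probability $e^{-\Omega(\log^5 n)}$.

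The gap in your proposal is precisely the step you flag as ``the main difficulty'': the second-moment lower bound on $|R_n|$ is asserted but not proved, and it is not clear it can be proved in the form you state. The obstruction is that you draw $m_k$ from $M_1\cup\{0\}$ with no upper bound on the size of $m_k$ other than $m_k a_k\le n$. This creates three problems. First, most tuples have $\sum_{k\ge 2}m_k a_k>n$ and contribute nothing, so the effective number of tuples is far below $n^{8+o(1)}$ and must itself be estimated. Second, bounding collisions $\sum m_k^{(1)}a_k=\sum m_k^{(2)}a_k$ requires controlling the magnitude of each term $m_k a_k$; without such control the ``dissociativity'' of $A$ is not enough, since a term $m_k a_k$ can be as large as $n$ and cancel an arbitrary combination of the others. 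Third, the deterministic split of $\mathbf{Z}_{>0}$ into $S_1,S_2$ by dyadic parity could wipe out the argument: residues that concentrate in one dyadic block of $[1,n]$ (and in fact the most useful residues are those comparable to $n$, so that $p_r\asymp(\log n)^7/n$ uniformly) may fall almost entirely in $S_1$, making $|R_n\cap S_2|$ tiny even if $|R_n|$ is large. The paper's key technical device, which your plan omits, is the restriction $m_j\in[2^{i_j}]$ with $2^{i_j}\approx n/(a_j\log n)$: this forces $\sum_{j\ge 2}m_j a_j=o(n)$, so the residue $m_1$ automatically lies in $[2^{i_1}]\supset[n]$ and is tested against a single random set $M_{a_1,i_1}$ of the right scale, and it also makes each term $m_j a_j$ small enough ($<2n/\log n$) to drive the $\Delta$ computation in the Janson inequality. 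Without an analogous restriction on the multiplicities, your second-moment step is not obviously recoverable, and the decoupling by dyadic parity needs a separate repair.
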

The upper estimate $n^{8+o(1)}$ can be improved
and we make no attempt to optimize
it here. The proof of this theorem is probabilistic and
can be easily modified to provide the existence
of such sets $M$ for many other sparse infinite sets $A$.

%Throughout the proof we omit all floor and ceiling signs whenever
%these are not crucial, in order to simplify the presentation.
%For two
%functions $f(n)$ and $g(n)$ we write  $f ~  g$ to indicate that
%$f(n)=(1+o(1))g(n)$, that is, to indicate that the limit of the
%ratio $f(n)/g(n)$ tends to $1$ as $n$ tends to infinity.

The rest of this note is organized as follows. In Section 2 we
present the simple proof of Theorem \ref{t11}. In Section 3
we describe the probabilistic construction of the set $M$
used in the proof of Theorem \ref{t12}. The proof it
satisfies the required properties and hence establishes Theorem
\ref{t12} is described in Section 4. Section 5 contains some
concluding remarks.

All logarithms throughout the paper are in base $2$, 
unless otherwise specified. 

\section{The proof of Theorem \ref{t11}}

\begin{proof}
Let $N \cup \{0\}=B \cup C$, $B \cap C=\emptyset$ be a partition 
of the set of all non-negative integers into two disjoint infinite
sets $B$ and $C$.  Let 
$$
D=\{\sum_{i \in B'} 2^i~:~ B' \subset B\}
$$
be the set of all sums of powers of two in which each exponent lies
in B. Similarly, put
$$
E=\{\sum_{j \in C'} 2^j~:~ C' \subset C\}.
$$
This is the set of 
all sums of powers of two in which each exponent is a member of
$C$.
Note that both $D$ and $E$ contain $0$, as the sets $B'$ and
$C'$ in their definition may be taken to be empty.
Since every non-negative integer has a unique binary
representation, it
is clear that each such integer has a unique representation as a sum
of an element of $D$ and an element of $E$.

Define, now,
$$
A=\{2^d~:~  d \in D\},
$$
and
$$
M=\{\sum_{e \in E'} 2^e~:~ E' \subset E \}.
$$
Clearly both  $A$ and $M$ are infinite.
We claim that every positive integer has a
unique representation of the form (\ref{e11}) with these $A$ and
$M$, that is, $p(n,A,M)=1$ for all $n \geq 1$.
Indeed, a general expression of the form 
$\sum_{a \in A} m_a a$ with $m_a \in M$ for all $a$ satisfies
$$
\sum_{a \in A} m_a a=\sum_{d \in D} \sum_{e \in E'_d} 2^{e+d}.
$$
Let $n=2^{t_1}+2^{t_2} + \ldots + 2^{t_r}$ be the binary
representation of $n$, with $0 \leq t_1<t_2 < \ldots < t_r$. By the 
fact that each nonnegative integer has a unique representation as a
sum of an element of D and an element of E, there are, for each
$1 \leq j \leq r$, unique $d_i\in D$ and $e_i \in E$ so that
$t_i=d_i+e_i$. The elements $d_i$ are not necessarily distinct.
Let $D'=\{d_1, d_2 \ldots ,d_r\}$ be the set of all distinct ones.
For each
$d \in D'$ define $m_{2^d}=\sum_{i: d_i=d} 2^{e_i}$
and observe that 
$$
n=\sum_{d \in D} m_{2^d} \cdot 2^d.
$$
Therefore $n$ has a representation of the form (\ref{e11}). It is
not difficult to check that this representation is unique, that is,
the elements $d$ and $m_{2^d}$ in the expression above can be
reconstructed in the unique way described above from the binary
representation   of $n$. This shows that indeed $p(n,A,M)=1$ for
all $n$, completing the proof of Theorem \ref{t11}.  
\end{proof}

Note that by choosing the set $B$ so that $|B \cap
\{0,1,\ldots,r\}|=r-g(r)$, where $g(r)$ is an arbitrary monotone
function  of $r$ that tends to infinity, and $r-g(r)$ also tends to
infinity in a monotone way, we get that for every integer $r \geq
1$:
$$
|A \cap [2^{2^r}]|=|D \cap [2^{r}]| 
=|B \cap \{r\}|+2^{|B \cap \{0,1,\ldots ,r-1\}|}
=2^{r-1-g(r-1)} +O(1).
$$
As $r = \log \log n$ this shows that $|A \cap [n]|$ is 
$$
\Theta(\frac{\log n}{2^{g(\log \log n)}})+O(1),
$$ showing that by an appropriate choice of $g$ we can get sets $A$
whose growth function is at least $\log n/w(n)$ for an arbitrary
slowly growing function $w(n).$

\section{The probabilistic construction}

The set $M$ defined for the proof of Theorem \ref{t12} is a
union of the form $M=\cup_{a \in A} M_a$. Each set $M_a$
is a union $M_a=\cup_{i: 2^i \geq a} M_{a,i}$. Each of the sets 
$M_{a,i}$ is a random subset of $[2^i]$ obtained by picking each
number in $[2^i]$, randomly and independently, to be a member of 
$M_{a,i}$ with probability $\frac{i^6}{2^i}$ (if this ratio exceeds
$1$ then all members of $[2^i]$ lie in $M_{a,i}$, clearly this
happens only for finitely many values of $i$.)

We claim that $M$ satisfies the required properties with high 
probability (that is, with probability that tends to $1$ as $n_0$
tends to infinity). This is proved in the next section. The fact
that $p=p(n,A,M)$ has polynomial growth is simple: we show that
with high probability the set $M$ is sparse enough to ensure that
the total number of expressions of the form (\ref{e11}) that are at
most $n$ does not exceed $n^{8+o(1)}$. The fact that with high probability
$p(n,A,M)>0$ for all sufficiently large $n$ is more complicated and
requires some work. It turns out to be convenient to
restrict attention to
expressions (\ref{e11}) of a special form that enable us to control
their behaviour in an effective manner. We can then apply the
Janson Inequality (c.f., \cite{AS}, Chapter 8) to derive the
required result.

\section{The proof of Theorem \ref{t12}}

\subsection{Polynomial growth}

Let $A$ be as in Theorem \ref{t12}, and let 
$M=\cup_{a \in A} \cup_{i: 2^i \geq a} M_{a,i}$ be  as in the
previous section. In this subsection we show that with high
probability the function $p(n,A,M)$ has polynomial growth.
\begin{lemma}
\label{l31}
For every $\epsilon >0$ there exists an $n_0=n_0(\epsilon)$ so that
with probability at least $1-\epsilon$, $M$ is infinite
and $|M \cap [n]| < \log^8
n-1$ for all $n>n_0$.
\end{lemma}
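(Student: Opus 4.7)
The plan is to bound $|M \cap [n]|$ above by the sum
$$
X_n := \sum_{a \in A} \sum_{i : 2^i \geq a} |M_{a,i} \cap [n]|,
$$
which is obviously at least $|M \cap [n]|$ and, being a sum of independent Bernoulli variables indexed by triples $(a,i,m)$ with $m \leq \min(n, 2^i)$, is amenable to standard concentration tools.

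The first step is to compute $E[X_n]$. I would split the inner sum at $i = \log n$. For each $i$ with $2^i \leq n$, every $a \in A \cap [2^i]$ contributes $E[|M_{a,i}|] = i^6$; together with the hypothesis $|A \cap [2^i]| = (1+o(1))\,i/\log i$, summing yields $(1+o(1))\sum_{i \leq \log n} i^7/\log i = O(\log^8 n/\log\log n)$. For $i > \log n$ each contributing $a$ gives at most $n\,i^6/2^i$, and the geometric factor $n/2^i = 2^{-(i - \log n)}$ absorbs this into a lower-order $O(\log^7 n / \log\log n)$ tail. Hence $E[X_n] = O(\log^8 n/\log\log n) = o(\log^8 n)$.

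Next, the multiplicative Chernoff bound supplies the concentration. With $\mu_n = E[X_n]$ and threshold $(1+\delta)\mu_n = \log^8 n - 1$ we have $\delta = \Theta(\log\log n)$ and $\delta\mu_n = \Theta(\log^8 n)$, so
$$
P(X_n \geq \log^8 n - 1) \leq \exp(-\Omega(\log^8 n)).
$$
Because $|M \cap [\,\cdot\,]|$ is monotone in the argument, it is enough to verify the inequality at $n = 2^k$ for $k > k_0$ and interpolate across each dyadic block. The failure probabilities at these $n$ form a rapidly convergent series; picking $k_0 = k_0(\epsilon)$ large enough keeps the total failure probability below $\epsilon/2$, giving the upper bound for all $n > n_0 := 2^{k_0}$.

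To handle the infinitude of $M$ I would exploit that $1 \in A$, so $M_{1,i}$ is defined for every $i \geq 1$. The probability that $M_{1,i}$ contains no element of $(2^{i-1}, 2^i]$ is at most $(1 - i^6/2^i)^{2^{i-1}} \leq \exp(-i^6/2)$, which is summable in $i$; by choosing the starting index large we get, with probability at least $1 - \epsilon/2$, that $M$ meets $(2^{i-1}, 2^i]$ for all sufficiently large $i$, and is in particular infinite. The principal obstacle is the tightness of the expectation bound: the target $\log^8 n - 1$ exceeds $E[X_n]$ only by a $\log\log n$ factor, so the $(1+o(1))\,i/\log i$ estimate on $|A \cap [2^i]|$ must be used quantitatively, and Chernoff rather than Markov is needed, since the Markov-type tail $O(1/\log\log n)$ is not summable even over dyadic $n$.
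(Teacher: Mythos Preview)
Your proof is correct and follows essentially the paper's route: bound the expectation of (an upper bound for) $|M\cap[n]|$ by something strictly below $\log^8 n$, apply a Chernoff bound to get failure probability $e^{-\Omega(\log^8 n)}$, and union-bound over $n$. The only cosmetic differences are that the paper works with $|M\cap[n]|$ itself (bounding $\Pr[m\in M]\le \log^7 m/m$ and summing over $m\le n$) rather than your overcount $X_n$, sums the failure probabilities over all $n$ rather than dyadic $n$, and simply observes that $M$ is infinite with probability $1$.
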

\begin{proof}
It is obvious that $M$ is infinite with probability $1$. We proceed
to prove the main part of the lemma.

Let $m$ be a sufficiently large integer. If $2^i<m$ then
$m$ cannot lie in $M_{a,i}$. If $2^i \geq m$, then the probability
that $m \in M_{a,i}$ is $\frac{i^6}{2^i}.$ Since for large $i$,
$\frac{(i+1)^6}{2^{i+1}} < \frac{2}{3} \frac{i^6}{2^i}$ it follows
that for $a \leq m$ the probability 
$Pr[m \in M_a=\cup_{i: 2^i \geq m} M_{a,i}]$ 
is smaller than
$3 \frac{\log^6 m}{m}.$ Similarly, for $a >m$, 
$Pr[ m \in M_a] < 3 \frac{\log^6 a}{a}$.  Summing over all
$a \in A,~ a >m$ and using the fact that $A$ is sparse we conclude that
$Pr[ m \in \cup_{a >m} M_a] < 10 \frac{\log^6 m}{m}$. Since there are
$o(\log m)$ members of $A$ that are smaller than $m$ we also get that
(since $m$ is large) $Pr[m \in \cup_{a\leq m}M_a ] < 0.9
\frac{\log^7 m}{m}$. Altogether, the probability that $m$ lies in
$M=\cup_a M_a$ does not exceed $\frac{\log^7 m}{m}$.

It follows that the expected value of $|M \cap [n]|$ is smaller
than
$O(1)+ \frac{1}{8}\log_e ^8 n< 0.5 \log_2^8 n$ (where the $O(1)$ 
term is added to account for
the small values of $m$). As this cardinality is a sum of
independent random variables we can apply the
Chernoff-Hoeffding Inequality
(c.f., e.g., \cite{AS}, Appendix A) and conclude that the
probability that $|M \cap [n]|$ is at least $\log^8 n -1$ is at
most $e^{-\Omega(\log^8 n)}$. For sufficiently large $n_0$
the sum $\sum_{n \geq n_0} e^{-\Omega(\log^8 n)}< \epsilon$,
completing the proof of the lemma.
\end{proof}

\begin{coro}
\label{c32}
Let $A$ and $M$ be as above.
For any $\epsilon>0$ there is an  $n_0=n_0(\epsilon)$ so that
with probability at least $1-\epsilon$, $M$ is infinite and 
$\sum_{n \leq m} p(n,A,M)
\leq m^{8+o(1)}$ for all $m>n_0$.
\end{coro}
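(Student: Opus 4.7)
The plan is to bound the total number of representations by a crude counting argument using the two cardinality bounds we have in hand: the hypothesis on $|A \cap [m]|$ and Lemma \ref{l31}'s bound on $|M \cap [m]|$.

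First I would observe that any representation $n = \sum_{a \in A} m_a a$ with $n \le m$ must use only parts $a \le m$ and only positive multiplicities $m_a \le m$ (since $m_a a \le n \le m$). Consequently, the representation is determined by the choice, for each $a \in A \cap [m]$, of an element $m_a \in (M \cap [m]) \cup \{0\}$. Summing over all $n \le m$ then gives
\[
\sum_{n \le m} p(n, A, M) \;\le\; \bigl(|M \cap [m]| + 1\bigr)^{|A \cap [m]|}.
\]

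Next I would invoke Lemma \ref{l31}: with probability at least $1-\epsilon$, there is some $n_0$ so that for all $m > n_0$ we have $|M \cap [m]| < \log^8 m - 1$, i.e. $|M \cap [m]| + 1 \le \log^8 m$. Combined with the hypothesis $|A \cap [m]| = (1+o(1))\log m / \log\log m$ from Theorem \ref{t12}, the above bound becomes
\[
\sum_{n \le m} p(n, A, M) \;\le\; \bigl(\log^8 m\bigr)^{(1+o(1))\log m / \log\log m} \;=\; 2^{(8+o(1))\log m} \;=\; m^{8+o(1)},
\]
using that $\log(\log^8 m) = 8 \log\log m$ so the exponent is $8\log\log m \cdot (1+o(1))\log m/\log\log m = (8+o(1))\log m$.

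There is no real obstacle here: the argument is essentially a one-line counting estimate, with the only inputs being Lemma \ref{l31} (already established) and the hypothesized sparsity of $A$. The only mild subtlety is remembering to allow $m_a = 0$ in the count (hence the ``$+1$''), and to note that the bound $|M \cap [m]| < \log^8 m$ combined with $|A \cap [m]| = O(\log m / \log\log m)$ is precisely tuned so that $\log^8$ raised to the $A$-cardinality power gives exponent $\approx 8 \log m$, matching the $m^{8+o(1)}$ in the statement.
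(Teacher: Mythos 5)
Your proof is correct and is essentially identical to the paper's argument: the paper bounds the total count by $|(M\cap[m])\cup\{0\}|^{|A\cap[m]|}$ and then plugs in Lemma \ref{l31} and the density hypothesis on $A$, exactly as you do.
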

\begin{proof}
Let $\epsilon$ and $n_0=n_0(\epsilon)$ be as in Lemma \ref{l31} and
suppose that $M$ satisfies the assertion of the lemma (this
happens with probability at least $1-\epsilon$). Then, the number
of representations of the form (\ref{e11}) of integers $n$ that do
not exceed $m$ is at most $|(M \cap [m]) \cup \{0\}|^{|A \cap
[m]|}$, as all coefficients $m_a$ and all numbers $a$ with a
nonzero coefficient must be at most $m$. This expression
is at most $(\log^8 m)^{(1+o(1)) \log m / \log \log m}
=m^{8+o(1)}$, as needed.
\end{proof}

\subsection{Representing all large integers}

In this subsection we prove that with high probability every
sufficiently large number $n$ has a representation of the form
(\ref{e11}) where $A$ and $M$ are as above. To do so, it is
convenient to insist on a representation of a special form,
which we proceed to define. Put $A'=\{a \in A: a \leq n^{1/3}\}.$
Let $q=|A'|$, $A'=\{a_1, a_2 , \ldots ,a_q\}$ where
$1=a_1<a_2 <\ldots <a_q$. Thus $q =(\frac{1}{3}+o(1)) \frac{\log
n}{\log \log n}$. Let $i_1$ be the smallest integer $t$ so that
$2^{t} \geq n$. For $2 \leq j \leq q$, let $i_j$ be the smallest
integer $t$ so that $2^t \geq \frac{n}{a_j \log n}$. Therefore,
$n \leq 2^{i_1} < 2n$ and $\frac{n}{a_j \log n} \leq 2^{i_j}
<\frac{2n}{a_j \log n}$ for all $2 \leq j \leq q$. 
Since $a_j \leq n^{1/3}$ for all $a_j \in A'$, it follows that
$i_j > \frac{1}{2} \log n$ for all admissible $j$.

We say that
$n$ has a special representation as a partition with parts in $A$
and
multiplicities in $M$ (for short: n has a special representation)
if there is a representation of the form (\ref{e11}) where
$m_j \in M_{a_j,i_j}$ for all $1 \leq j \leq q$.
\begin{lemma}
\label{l33}
For all sufficiently large $n$, the probability that $n$ does not
have a special representation is at most
$e^{-\Omega(\log^5 n)}.$ Therefore, for any $\epsilon>0$ there is
an $n_0=n_0(\epsilon)$ such that the probability that there is 
a special representation for every integer $n>n_0$ is at least
$1-\epsilon$.
\end{lemma}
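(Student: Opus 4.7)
The plan is to apply Janson's inequality (flagged at the end of Section~3) to the events indexed by valid tuples. For each tuple $\vec m = (m_1, \ldots, m_q)$ with $m_j \in [2^{i_j}]$ and $\sum_j m_j a_j = n$, let $B_{\vec m}$ be the event that $m_j \in M_{a_j, i_j}$ for every $j$; the event that $n$ has a special representation is $\bigcup_{\vec m \in T} B_{\vec m}$, where $T$ is the set of such tuples. Setting $p_j = i_j^6/2^{i_j}$, one has $\Pr[B_{\vec m}] = \prod_j p_j$.

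First I would establish the count $|T| = \prod_{j \geq 2} 2^{i_j}$: since $2^{i_j} a_j \leq 2n/\log n$ for $j \geq 2$, any $(m_2, \ldots, m_q) \in \prod_{j \geq 2} [2^{i_j}]$ gives $\sum_{j \geq 2} m_j a_j = o(n)$, so the forced $m_1 = n - \sum_{j \geq 2} m_j a_j$ lies in $[n/2, n] \subseteq [2^{i_1}]$. The first moment is $\mu := |T| \prod_j p_j = \prod_j i_j^6/2^{i_1}$; using $i_j \geq \tfrac{1}{2}\log n$ and $q = (1+o(1))\log n/(3\log\log n)$, this evaluates to $\mu = n^{1-o(1)}$, much larger than $\log^5 n$.

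The bulk of the argument is a bound on the dependency sum $\Delta = \sum_{\vec m \neq \vec m',\, J(\vec m, \vec m') \neq \emptyset} \Pr[B_{\vec m} \cap B_{\vec m'}]$, where $J(\vec m, \vec m') = \{j : m_j = m'_j\}$ records the shared atoms and $\Pr[B_{\vec m} \cap B_{\vec m'}] = \prod_j p_j \prod_{j \notin J} p_j$. I would partition by $J$. If $1 \notin J$, the free-variable count for $|T|$ applies to $\vec m'$ as well, giving $|\{\vec m' : J(\vec m, \vec m') \supseteq J\}| \leq \prod_{j \geq 2,\, j \notin J} 2^{i_j}$; using $p_j 2^{i_j} = i_j^6$ and the identity $\sum_{J \subseteq \{2, \ldots, q\}} \prod_{j \geq 2,\, j \notin J} i_j^6 = \prod_{j \geq 2}(1 + i_j^6)$, the total contribution comes out to at most $\mu \cdot p_1 \cdot \bigl(\prod_{j \geq 2}(1 + i_j^6) - \prod_{j \geq 2} i_j^6\bigr) = O\bigl(\mu^2 \sum_{j \geq 2} i_j^{-6}\bigr) = O(\mu^2 q/\log^6 n)$. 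If $1 \in J$, the free variable is lost, and the natural parametrization is by the disagreement set $K = \{j \geq 2 : m_j \neq m'_j\}$, which must satisfy the ``sum-collision'' $\sum_{j \in K}(m_j - m'_j) a_j = 0$; a crude count (eliminating the coordinate $m'_{\min K}$) gives per-$\vec m$ contribution at most $p_{\min K} \prod_{j \in K,\, j \neq \min K} i_j^6$, and the dominant term $J = \{1\}$ (i.e.\ $K = \{2, \ldots, q\}$) yields $O(p_2 \cdot (\log n)^{6(q-2)}) = O(a_2 n/\log^5 n)$, hence $O(\mu^2/\log^5 n)$ after multiplying by $\mu \sim n$, using that $a_2 = O(1)$ is a fixed constant. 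Higher $|J|$ contributions are smaller by further factors of $q/\log^6 n \ll 1$, so $\Delta = O(\mu^2/\log^5 n)$.

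Janson's inequality then gives $\Pr\!\bigl[\bigcap_{\vec m \in T} \overline{B_{\vec m}}\bigr] \leq \exp\!\bigl(-\mu^2/(2(\mu+\Delta))\bigr) \leq \exp(-\Omega(\log^5 n))$, proving the first claim. The second follows from a union bound: $\sum_{n > n_0} e^{-\Omega(\log^5 n)}$ is summable and is at most $\epsilon$ for $n_0(\epsilon)$ sufficiently large. I expect the $1 \in J$ case to be the main obstacle, since losing the free coordinate at $j = 1$ forces a direct count of ``sum-collisions'' between random-looking linear combinations of the $a_j$'s; the available bound just barely absorbs the loss, with the multiplicative constant $a_2$ being harmless precisely because $A$ is fixed independently of $n$.
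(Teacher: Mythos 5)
Your proposal follows the paper's own proof essentially step for step: apply the Janson inequality to the events indexed by tuples summing to $n$, compute $\mu = n^{1\pm o(1)}$, bound the dependency sum $\Delta$ by decomposing pairs according to which coordinates they share, and observe that the bottleneck is the term where only the coordinate $j=1$ is shared (the paper's $\Delta_2$, your $J=\{1\}$), which is $O(\mu^2 a_2/\log^5 n)$. The paper organizes the decomposition by $\ell = \min\{j : m_j^{(1)} \ne m_j^{(2)}\}$ and sums over additional agreement beyond $\ell$, while you organize by the full agreement set $J$ and then by the disagreement set $K$; this is only a bookkeeping difference.

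One place in your sketch is genuinely imprecise and would need repair. You claim that ``higher $|J|$ contributions are smaller by further factors of $q/\log^6 n$.'' That is correct if the extra agreement occurs at positions $j > \min K$ (each such agreement removes a factor $i_j^6$), but it is \emph{not} correct per step when $\min K$ itself moves from $\ell$ to $\ell+1$: the ratio of contributions then contains the factor $2^{i_\ell}/2^{i_{\ell+1}} \approx a_{\ell+1}/a_\ell$, which under the hypothesis on $A$ can be as large as $n^{o(1)}$ for some individual $\ell$ and hence dwarfs $\log^6 n$. What saves the argument is not a per-step decay but the cumulative bound $a_\ell \le \ell^{(1+o(1))\ell} \le (\log n)^{(1+o(1))\ell}$, which is exactly what the paper invokes to show $\Delta_\ell = O(\mu^2/\log^{(5-o(1))\ell - 7} n)$ for $\ell \ge 3$. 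You flag the tightness (``the available bound just barely absorbs the loss'') but do not invoke the needed estimate on $a_\ell$; without it the ``geometric decay'' claim is unjustified. Also, your intermediate estimate $p_2 (\log n)^{6(q-2)} = O(a_2 n/\log^5 n)$ carries a hidden $n^{o(1)}$ slack (since $(\log n)^{6q} = n^{2+o(1)}$ and $\mu = n^{1-o(1)}$, not exactly $n$); the clean route is to keep $\prod_j i_j^6$ intact so it cancels against $\mu^2$, as the paper does, giving the exact factor $2^{i_1}/(2^{i_2} i_1^6) \le 2a_2/\log^5 n$.

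Aside from these, the structure, the counting of $|T|$, the $1 \notin J$ bound of $O(\mu^2 q/\log^6 n)$, and the application of Janson and the union bound over $n > n_0$ all match the paper.
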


The assertion of Theorem \ref{t12} follows from Corollary \ref{c32}
(with $\epsilon <1/2$) and Lemma \ref{l33} (with $\epsilon<1/2$)
that supply the existence of an infinite set $M$ satisfying the
conclusions of the theorem.

In the proof of Lemma \ref{l33} we apply the Janson 
Inequality (c.f. \cite{AS}, Chapter 8). We first state it 
to set the required notation. Let $X$ be a finite set, and let $R
\subset X$ be a random subset of $X$ obtained by picking each
element $r \in X$ to be a member of $R$, randomly and
independently, with probability $p_r$. Let $\CC=\{ C_i \}_{i \in I}$ be
a collection of subsets of $X$, let $B_i$ denote the event that
$C_i \subset R$, and let $i \sim j$ denote the fact that 
$i \neq j $ and $C_i \cap C_j \neq \emptyset$.
Let $\mu=\sum_{i \in I} Pr[B_i]=\sum_{i \in I} \prod_{j \in C_i}
p_j$ be the expected number of events $B_i$ that hold, and define
$\Delta=\sum_{i,j \in I, i\sim j} Pr[B_i \cap B_j]$ where the sum is
computed over ordered pairs. The inequality we need is the
following.
\begin{lemma}[The Janson Inequality]
\label{l34}
In the notation above, if $\Delta \leq D$ with $D \geq \mu$
then the probability that no event $B_i$ holds is at most
$e^{-\mu^2/2 D}.$
\end{lemma}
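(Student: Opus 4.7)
The plan is to prove the Janson Inequality in two stages: first establish the linear form
\[
\Pr\Bigl[\bigcap_{i \in I} \overline{B_i}\Bigr] \;\leq\; \exp\!\bigl(-\mu + \tfrac{1}{2}\Delta\bigr),
\]
and then boost it to the quadratic form $\exp(-\mu^2/(2D))$ via a random thinning trick. The key background tool is the Harris/FKG correlation inequality on the product space $\{0,1\}^X$: monotone events of the same direction are positively correlated, while an increasing and a decreasing event are negatively correlated. In our setting each $B_i$ is increasing and each $\overline{B_i}$ is decreasing in the random set $R$.

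For the linear form I would fix any ordering on $I$ and write
\[
\Pr\Bigl[\bigcap_{i \in I} \overline{B_i}\Bigr] \;=\; \prod_i \Pr\Bigl[\overline{B_i} \,\Big|\, \bigcap_{j<i} \overline{B_j}\Bigr],
\]
reducing the task to bounding each conditional factor. The heart of the argument is the inequality
\[
\Pr\Bigl[B_i \,\Big|\, \bigcap_{j<i} \overline{B_j}\Bigr] \;\geq\; \Pr[B_i] \;-\; \sum_{\substack{j<i \\ j \sim i}} \Pr[B_i \cap B_j].
\]
To prove it, split the earlier indices into $I_0 = \{j<i : j \sim i\}$ and $I_1 = \{j<i : j \not\sim i\}$. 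Since $B_i$ depends only on coordinates of $X$ in $C_i$ while $\bigcap_{k \in I_1} \overline{B_k}$ depends only on coordinates outside $C_i$, these two events are independent, and this yields the main term. For the error term, bound the event $\bigcup_{j \in I_0} B_j$ by a union over $j \in I_0$ and use FKG: each $B_i \cap B_j$ is increasing and $\bigcap_{k \in I_1} \overline{B_k}$ is decreasing, so the joint probability factorizes with a $\leq$. The denominator $\Pr[\bigcap_{j<i}\overline{B_j}]$ is in turn bounded above by $\Pr[\bigcap_{j \in I_1}\overline{B_j}]$, producing the required cancellation. Substituting into the product and using $\log(1-x) \leq -x$ term by term produces the linear form, noting that the double sum $\sum_i \sum_{j<i, j\sim i} \Pr[B_i \cap B_j]$ equals $\Delta/2$.

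For the quadratic form I would apply random thinning. Fix $p \in (0,1]$ and form a random $J \subseteq I$ by keeping each index independently with probability $p$. For any deterministic realization of $J$, applying the linear form to $\{B_i\}_{i \in J}$ together with $\Pr[\bigcap_{i \in I}\overline{B_i}] \leq \Pr[\bigcap_{i \in J}\overline{B_i}]$ gives
\[
\Pr\Bigl[\bigcap_{i \in I}\overline{B_i}\Bigr] \;\leq\; \exp\!\bigl(-\mu_J + \tfrac12 \Delta_J\bigr),
\]
where $\mu_J$ and $\Delta_J$ are the natural restrictions. Since $\mathbb{E}[-\mu_J + \tfrac12\Delta_J] = -p\mu + \tfrac12 p^2 \Delta$, there exists a realization with exponent at most its expected value. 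Choosing $p = \min(1,\mu/\Delta)$ optimizes the exponent to $-\mu^2/(2\Delta)$ when $\Delta \geq \mu$, and to $-\mu + \tfrac12\Delta \leq -\mu/2$ otherwise; the hypotheses $D \geq \Delta$ and $D \geq \mu$ convert either bound into $\exp(-\mu^2/(2D))$.

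The step I expect to be the main obstacle is the key conditional inequality in the second paragraph, which delicately combines three distinct correlation facts: independence of $B_i$ from events on coordinates outside $C_i$, the FKG negative-correlation bound between the increasing event $B_i \cap B_j$ and the decreasing event $\bigcap_{k \in I_1}\overline{B_k}$, and the monotonicity of the denominator under dropping the constraints indexed by $I_0$. Once the conditional inequality is in hand, the exponentiation and the optimization over the thinning probability $p$ are routine.
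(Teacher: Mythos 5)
Your proposal is correct, but it takes a more self-contained route than the paper does. The paper does not actually prove Lemma \ref{l34}: it observes that the statement follows at once from the two Janson inequalities in \cite{AS}, Chapter 8, by a two-case reduction (if $\Delta<\mu$ use the basic bound $e^{-\mu+\Delta/2}$ together with $D\geq \mu$; if $\Delta\geq\mu$ use the extended bound $e^{-\mu^2/(2\Delta)}$ together with $D\geq\Delta$). You instead reprove both cited inequalities from first principles: the linear form via the Boppana--Spencer conditioning argument (independence of $B_i$ from the events indexed by $I_1$, Harris/FKG for the increasing event $B_i\cap B_j$ against the decreasing event $\bigcap_{k\in I_1}\overline{B_k}$, and monotonicity of the denominator), and the quadratic form via the random-thinning/deletion argument with $p=\min(1,\mu/\Delta)$; your choice of $p$ then reproduces exactly the paper's case split, and the final conversion to $e^{-\mu^2/(2D)}$ using $D\geq\mu$ and $D\geq\Delta$ is right. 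What your version buys is a proof that does not lean on the textbook; what the paper's version buys is brevity, since the lemma is standard. Two cosmetic points if you write it out in full: the per-factor bound is cleaner via $1+y\leq e^y$ (the bracket $1-\Pr[B_i]+\sum_{j<i,\,j\sim i}\Pr[B_i\cap B_j]$ need not be of the form $1-x$ with $x\geq 0$, so quoting $\log(1-x)\leq -x$ is slightly off), and you should dispose of the degenerate case where some conditioning event $\bigcap_{j<i}\overline{B_j}$ has probability zero, in which case the whole bound is trivial.
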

Note that the above statement is an immediate consequence of the
two Janson inequalities described in \cite{AS}, Chapter 8. If 
$\Delta < \mu$ than the above statement follows from the first
inequality, whereas if $\Delta \geq \mu$ then it follows from the
second.

We can now prove Lemma \ref{l33}.

\begin{proof}
We apply the Janson inequality as stated in Lemma \ref{l34} above.
The set $X$ here is a disjoint union of the sets
$X_j=[2^{i_j}]$, $1 \leq j \leq q$, and each element of $X_j$ is chosen
with probability $\frac{i_j^6}{2^{i_j}}$. The sets in the
collection of sets $\CC$ are all sequences of the form $(m_1,m_2,
\ldots, m_q)$ with $m_j \in X_j$ so that 
\begin{equation}
\label{e31}
\sum_{j=1}^q m_j a_j=n.
\end{equation}
Note that there are exactly $\prod_{j=2}^q 2^{i_j}$ such sets,
as for any choice of $m_j \in X_j$, $2 \leq j \leq q$
there is a unique choice of
$m_1 \in X_1$ so that (\ref{e31}) holds. Therefore, in the notation
above
$$
\mu=\prod_{j=2}^q 2^{i_j} \prod_{j=1}^q \frac{i_j^6}{2^{i_j}}=
\frac{1}{2^{i_1}} \prod_{j=2}^{q} i_j^6= n^{1-o(1)},
$$
where the last equality follows from the fact that
$\frac{1}{2} \log n \leq i_j \leq \log n$ for all $2 \leq j \leq q$
and the fact that $q=(\frac{1}{3}+o(1)) \frac{\log n}{\log \log
n}.$

We proceed with the estimation of the quantity $\Delta$ that
appears in the inequality. This is the sum, over all ordered pairs of
sequences
$m_j^{(1)}$ and $m_j^{(2)}$ with $m_j^{(1)},m_j^{(2)} \in X_j$,
where $\sum m_j^{(1)} a_j = \sum m_j^{(2)} a_j=n$ and
for at least one $r$, $m_r^{(1)}=m_r^{(2)}$, of the probability that
both $m_j^{(1)}$ and $m_j^{(2)}$ belong to $M_{a_j,i_j}$ for all
$j$.

Write $\Delta=\sum_{\ell=1}^q \Delta_{\ell}$, where $\Delta_{\ell}$
is the sum of these probabilities over all pairs 
$m_j^{(1)}$ and $m_j^{(2)}$ as above for which
$\ell=\min\{j: m_j^{(1)} \neq m_j^{(2)}~\}.$
\vspace{0.3cm}

\noindent
{\bf Claim 1:}\, 
$$
\Delta_1 \leq \frac{1}{2^{i_1}} \prod_{j=1}^q i_j^6
\sum_{\emptyset \neq I \subset \{2,3,\ldots,q\}}
\frac{i_1^6}{2^{i_1}} \prod_{j>1, j \not \in I} i_j^6.
$$

Indeed, for each choice of the sequence $m_j^{(1)}$ the
contribution to $\Delta_1$  arises from sequences $m_j^{(2)}$
for which $m_r^{(1)}=m_r^{(2)}$ for all $r$ is some nonempty subset
$I$ of $\{2,3, \ldots ,q\}$. There are $\prod_{j=2}^q 2^{i_j}$
choices for the sequence $m_j^{(1)}$ (as the value of $m_1^{(1)}$ is
determined by the value of the sum $\sum_{j=1}^q m_j^{(1)} a_j$).
The probability that each $m_j^{(1)}$ lies in $M_{a_j,i_j}$ is
$\frac{i_j^6}{2^{i_j}}$. For each fixed choice of $m_j^{(1)}$ and
for each nonempty subset $I$ as above, there are $\prod_{j>1, j
\not \in I} (2^{i_j}-1)$ possibilities to choose the numbers
$m_j^{(2)}, j>1, j \not \in I$, and the value of $m_1^{(2)}$ is
determined (and has to differ from $m_j^{(1)}$, which is another
reason the above is an upper estimate for $\Delta_1$ and not a
precise computation). The probability that 
$m_j^{(2)} \in M_{a_j, i_j}$ for all these values of $j$ is
$\frac{i_1^6}{2^{i_1}} \prod_{j>1, j \not \in I}
\frac{i_j^6}{2^{i_j}}$,
implying Claim 1.

Plugging the value of $\mu$ in Claim 1, we conclude that
\begin{equation}
\label{e32}
\Delta_1 \leq \mu^2 
\sum_{\emptyset \neq I \subset \{2,3,\ldots,q\}}
\prod_{j \in I} \frac{1}{i_j^6}
=\mu^2 ([\prod_{j=2}^q(1+\frac{1}{i_j^6})]-1) \leq
\mu^2 \frac{1}{\log^5 n}.
\end{equation}
The final inequality above follows from the fact that 
$i_j > 0.5 \log n$ for all $j$ and that $q=o(\log n)$.
\vspace{0.3cm}

\noindent
{\bf Claim 2:}\, 
For any $2 \leq \ell \leq q$,
$$
\Delta_{\ell} \leq \frac{1}{2^{i_1}} \prod_{j=1}^q i_j^6
\cdot \frac{i_{\ell}^6}{2^{i_{\ell}}}
\sum_{I \subset \{\ell+1,\ell+2,\ldots,q\}}
\prod_{r \in \{\ell+1, \ell+2, \ldots ,q\}-I} i_r^6.
$$

The proof is similar to that of Claim 1 with a few modifications.
For each choice of the sequence $m_j^{(1)}$ the
contribution to $\Delta_{\ell}$  arises from sequences $m_j^{(2)}$
for which $m_r^{(1)}=m_r^{(2)}$ for all $r<\ell$, $m_{\ell}^{(1)}
\neq m_{\ell}^{(2)}$, and there is a possibly empty
subset
$I$ of $\{\ell+1,\ell+2, \ldots ,q\}$ so that
$m_r^{(1)}=m_r^{(2)}$ for all $r \in I$ (and for no other $r$
in $\{\ell+1,\ell+2, \ldots ,q\}.$)
As in the proof of Claim 1 there are $\prod_{j=2}^q 2^{i_j}$
choices for the sequence $m_j^{(1)}$, and
the probability that each $m_j^{(1)}$ lies in $M_{a_j,i_j}$ is
$\frac{i_j^6}{2^{i_j}}$. For each fixed choice of $m_j^{(1)}$ and
for each subset $I$ as above, there are 
$\prod_{r \in \{\ell+1, \ell+2, \ldots ,q\}-I} (2^{i_r}-1)$ possible
choices for $m_r^{(2)}$, $r \in 
\{\ell+1, \ell+2, \ldots ,q\}-I$, and the probability that each
of those lies in the corresponding $M_{a_r,i_r}$ is
$\frac{i_r^6}{2^{i_r}}$. The product of these two terms is at most
$\prod_{r \in \{\ell+1, \ell+2, \ldots ,q\}-I} i_r^6.$
Finally, the value of $m_{\ell}^{(2)}$ is determined by the values
of all other $m_j^{(2)}$ and by the fact that 
$\sum_{j=1}^q m_j^{(2)} a_j =n$. (Note that this value has to lie
in $[2^{i_{\ell}}]$, otherwise we do not get any contribution here.
This is fine, as we are only upper bounding $\Delta_{\ell}$.)
Finally, the probability that $m_{\ell}^{(2)} \in
M_{a_{\ell},i_{\ell}}$ is $\frac{i_{\ell}^6}{2^{i_{\ell}}} $. This
completes the explanation for the estimate in Claim 2.

Plugging the value of $\mu$ we get, by Claim 2, that for any
$2 \leq \ell \leq q$
$$
\Delta_{\ell}
\leq \mu^2 \frac{2^{i_1}}{2^{i_{\ell}}} 
\frac{1}{i_1^6 i_2^6 \cdots i_{\ell-1}^6} 
\sum_{I \subset \{\ell+1,\ell+2,\ldots,q\}} \prod_{r \in I}
\frac{1}{i_r^6}
$$
$$
\leq \mu^2 \frac{2^{i_1}}{2^{i_{\ell}}} 
\frac{1}{i_1^6 i_2^6 \cdots i_{\ell-1}^6} \prod_{r=\ell+1}^q
(1+\frac{1}{i_r^6}) \leq 4 \mu^2 a_{\ell} \log n (\frac{2^6}{\log^6
n})^{\ell -1}.
$$
In the last inequality we used the fact that
$$
\frac{2^{i_1}}{2^{i_{\ell}}} \leq 2 a_{\ell} \log n~\mbox{ and } 
\prod_{r=\ell+1}^q(1+\frac{1}{i_r^6})<2.
$$
Since $a_{2}=O(1)$ we conclude that
$\Delta_2 \leq O(\frac{\mu^2 }{\log^5 n})$.
For any $\ell \geq 3$
we use the fact that $a_{\ell} < \ell^{(1+o(1)) \ell} \leq 
(\log n)^{\ell}$ to conclude that
$\Delta_{\ell} \leq O(\frac{\mu^2 }{\log^{(5-o(1)) \ell -7} n})
\leq O(\frac{\mu^2}{\log^7 n})$. 

Summing over all values of $\ell$ we conclude that
$\Delta=O(\frac{\mu^2}{\log^ 5 n})$. The assertion of Lemma
\ref{l33} thus follows from Lemma \ref{l34}. As mentioned after the
statement of this lemma, this also completes the proof of Theorem
\ref{t12}.
\end{proof}

\section{Concluding Remarks}

\begin{itemize}
\item
Since the proof of Theorem \ref{t12} 
is probabilistic and the probabilistic estimates
are strong enough it follows that for any finite collection of 
sequences $A_j$, each satisfying the assumptions of 
Theorem \ref{t12}, there is a sequence of multiplicities $M$ that is
good for each of them, where the number $n_0$ here depends on
all sequences $A_j$.
\item
The proof of Theorem \ref{t12} can be easily modified to work for 
any sequence $A$ that grows to infinity at
least as fast as $k^{\Omega(k)}$ and satisfies $gcd(A)=1$.
\item
Erd\H{o}s and Tur\'an \cite{ET} asked 
if for any asymptotic
basis of order $2$ of the positive integers (that is, a set $A$ of positive
integers so that each sufficiently large  integer has a
representation as a sum of two elements of $A$), there must be,
for any constant $t$,
integers that have more than $t$ such representations.
Theorem \ref{t11} shows that a natural analogous statement does not hold
for partition functions with restricted multiplicities.
\end{itemize}
\vspace{0.5cm}

\noindent
{\bf Acknowledgment}\,
Part of this work was carried out during a visit at the 
Ecole Polytechnique
in Palaiseau, France. I would like to thank 
Julia Wolf for her
hospitality during this visit. I would also like to thank
Mel Nathanson for communicating the Canfield-Wilf problem, and thank
him and Julia for helpful comments.

\end{document}